\theoremstyle{plain}
\newtheorem{Thm}{Theorem}
\newcommand*{\Ann}{\ensuremath{\mathrm{Ann\,}}}
\newcommand*{\R}{\ensuremath{\mathbb{R}}}
\newcommand*{\C}{\ensuremath{\mathbb{C}}}
\newcommand*{\N}{\ensuremath{\mathbb{N}}}
\begin{document}

	\date{}

\author{
	L\'aszl\'o Sz\'ekelyhidi\\
	{\small\it Institute of Mathematics, University of Debrecen,}\\
	{\small\rm e-mail: \tt szekely@science.unideb.hu,}
}	
	\title{Spectral Synthesis on Continuous Images}

	\maketitle
	
	\begin{abstract}
	
	\end{abstract}
	
	\footnotetext[1]{The research was supported by the  the
		Hungarian National Foundation for Scientific Research (OTKA),
		Grant No.\ K-134191.}\footnotetext[2]{Keywords and phrases:
		variety, spectral synthesis}\footnotetext[3]{AMS (2000) Subject Classification: 43A45, 22D99}
	\bigskip\bigskip


	\section{Introduction}
Let $G$ be a locally compact Abelian group. Spectral synthesis deals with uniformly closed translation invariant linear spaces of continuous complex valued functions on $G$. Such a space is called a {\it variety}. We say that {\it spectral analysis} holds for a variety, if every nonzero subvariety contains a one dimensional subvariety. We say that a variety is {\it synthesizable}, if its finite dimensional subvarieties span a dense subspace in the variety. Finally, we say that {\it spectral synthesis} holds for a variety, if every subvariety of it is synthesizable. On commutative topological groups finite dimensional varieties of continuous functions are completely characterized: they are spanned by exponential monomials. {\it Exponential polynomials} on a topological Abelien group are defined as the elements of the complex algebra of continuous complex valued functions generated by all continuous homomorphisms into the multiplicative group of nonzero complex numbers ({\it exponentials}), and all continuous homomorphisms into the additive group of all complex numbers ({\it additive functions}).  An {\it exponential monomial} is a function of the form
$$
x\mapsto P\big(a_1(x),a_2(x),\dots,a_n(x)\big)m(x),
$$
where $P$ is a complex polynomial in $n$ variables, the $a_i$'s are additive functions, and $m$ is an exponential. If $m=1$, then we call it a {\it polynomial}. Every exponential polynomial is a linear combination of exponential monomials. For more about spectral analysis and synthesis on groups see \cite{MR2680008,MR3185617}.
\vskip.2cm

In \cite{MR2340978}, the authors characterized those discrete Abelian groups having spectral synthesis: spectral synthesis holds for every variety on the discrete Abelian group $G$, if and only if $G$ has finite torsion free rank. In particular, from this result it follows, that if spectral synthesis holds on $G$ and $H$, then it holds on $G\oplus H$. Unfortunately, such a result does not hold in the non-discrete case. Namely, by the fundamental result of L.~Schwartz \cite{MR0023948}, spectral synthesis holds on $\R$, but D.~I.~Gurevich showed in \cite{MR0390759} that spectral synthesis fails to hold on $\R\times\R$. A complete description of those locally compact Abelian groups where spectral synthesis holds for the space of all continuous functions was obtained in \cite{Sze23c}, where the present author proved the following two theorems:

\begin{Thm}
	Spectral synthesis holds on the compactly generated locally compact Abelian group $G$ if and only if it is topologically isomorphic to $\R^a\times Z^b\times C$, where $C$ is compact, and $a,b$ are nonnegative integers with $a\leq 1$.
\end{Thm}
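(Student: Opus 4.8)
The plan is to combine the structure theory of compactly generated groups with two heredity principles and three ``building block'' synthesis results. By the principal structure theorem, a compactly generated locally compact Abelian group $G$ is topologically isomorphic to $\R^a\times\Z^b\times K$ with $K$ compact and $a,b$ nonnegative integers, so the task is to decide, within this list, exactly when synthesis holds; the claim is that this happens precisely when $a\leq 1$.

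For necessity I would argue by contraposition. If $a\geq 2$, then projecting onto two of the $\R$-coordinates gives a continuous surjective homomorphism $\pi\colon G\to\R^2$, and I would use that spectral synthesis passes to continuous homomorphic images. Concretely, for a variety $V\subseteq\mathcal{C}(\R^2)$ the pullback $\pi^*V=\{f\circ\pi:f\in V\}$ is again a variety: the map $\pi^*$ is an isometry of $\mathcal{C}(\R^2)$ onto the $\Ker\pi$-invariant functions, it carries translations on $\R^2$ to translations on $G$, and hence $\pi^*V$ is a translation invariant uniformly closed subspace. Since exponentials, additive functions and polynomials pull back to the same, the finite-dimensional subvarieties of $\pi^*V$ are exactly the pullbacks of those of $V$ (a $\Ker\pi$-invariant exponential monomial descends through $\pi$), so synthesizability transfers from $\pi^*V$ to $V$. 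Thus synthesis on $G$ would force synthesis on $\R^2$, contradicting Gurevich's theorem \cite{MR0390759}; therefore $a\leq 1$.

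For sufficiency, assume $a\leq 1$ and write $G=L\times K$ with $L=\R^a\times\Z^b$. I would first dispose of the compact factor by a Fourier-analytic reduction: decomposing $\mathcal{C}(L\times K)$ into its $\widehat{K}$-isotypic components via $f(x,k)\sim\sum_{\chi\in\widehat{K}}f_\chi(x)\chi(k)$, the $L$-translations commute with taking $\chi$-coefficients while the $K$-translations act as scalars on each component, so a variety on $L\times K$ projects to a family of varieties $V_\chi$ on $L$. Since each $\chi$ spans a one-dimensional subvariety and $K$ is compact, Fej\'er-type summation in the $K$-variable lets one reassemble synthesizing monomials, so synthesis on $L$ propagates to synthesis on $L\times K=G$. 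This reduces everything to $L=\R^a\times\Z^b$: when $a=0$ we have $L=\Z^b$, a discrete group of finite torsion-free rank, for which synthesis holds by \cite{MR2340978}.

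The hard part will be the remaining case $a=1$, i.e.\ synthesis on $\R\times\Z^b$, where a single continuous dimension is coupled to a finite-rank discrete part. Here I would adapt Schwartz's localization method \cite{MR0023948}: regarding the spectrum of a variety as a subset of the dual $\R\times\T^b$, one tries to show that every variety is the closed linear span of the exponential monomials it contains, by localizing at each spectral point, using Schwartz's one-dimensional synthesis theorem in the continuous direction and the algebraic localization underlying the finite-rank discrete case \cite{MR2340978} in the discrete directions. The hypothesis $a\leq 1$ is precisely what makes this local-to-global passage succeed, since two continuous dimensions would reintroduce the Gurevich obstruction. Combining this with the compact-factor reduction yields synthesis on all of $G$ and completes the proof.
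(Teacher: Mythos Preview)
The paper does not contain a proof of this theorem. Theorems~1 and~2 are quoted from \cite{Sze23c} as results established elsewhere; the present paper's own contribution is Theorem~\ref{mainthm} on continuous images of synthesizable varieties, together with the localization machinery (Theorems~\ref{deri} and~\ref{loc}) imported from \cite{Sze23}. There is therefore no proof in this paper to compare your proposal against.

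A couple of remarks on your outline nonetheless. Your necessity argument is essentially the heredity principle that this paper isolates as its main result: a continuous surjective homomorphism $G\to\R^2$ would transport synthesis from $G$ to $\R^2$, contradicting Gurevi\v{c}. That direction is sound (modulo checking that the projection is open, so that $\pi^*$ is a homeomorphism onto the $\Ker\pi$-invariant functions). Your sufficiency argument, by contrast, is only a sketch. The step ``dispose of the compact factor via isotypic decomposition'' needs an actual density argument, since a variety on $L\times K$ does not a priori split along $\widehat{K}$ and one must justify that Fej\'er-type approximation stays inside the given variety. More seriously, the case $\R\times\Z^b$, which you yourself flag as the crux, is handled only by a gesture toward combining Schwartz's one-dimensional method with the discrete finite-rank theorem of \cite{MR2340978}. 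Making that combination work is precisely the substantive content of \cite{Sze23c}, and your paragraph does not supply it. As written, the sufficiency direction has a genuine gap.
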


\begin{Thm}
	Spectral synthesis holds on the locally compact Abelian group $G$ if and only if $G/B$ is topologically isomorphic to $\R^a\times Z^b\times F$, where $B$ is the subgroup of all compact elements in $G$, $F$ is a discrete Abelian group of finite rank, and $a,b$ are nonnegative integers with $a\leq 1$.
\end{Thm}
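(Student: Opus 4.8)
The plan is to deduce this statement from the compactly generated case (the first theorem above) by combining the structure theory of locally compact Abelian groups with the behaviour of spectral synthesis under continuous homomorphic images. I begin with a structural normalisation. By the principal structure theorem $G$ is topologically isomorphic to $\R^n\times G_0$, where $G_0$ contains a compact open subgroup $K$. Since $0$ is the only compact element of $\R^n$, the subgroup $B$ of compact elements satisfies $B=\{0\}\times B_0$ with $K\subseteq B_0\subseteq G_0$; as $B_0$ is then open, $D:=G_0/B_0$ is discrete, and a short argument (if $ng\in B_0$ then already $g\in B_0$) shows that $D$ is even torsion-free. Hence $G/B$ is topologically isomorphic to $\R^n\times D$ with $D$ discrete and torsion-free, and the theorem is equivalent to the assertion that synthesis holds on $G$ precisely when $n\le 1$ and $D$ has finite rank (so that $a=n$, while $\Z^b\times F$ is merely a presentation of the finite-rank discrete group $D$).

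The tool I would develop --- and the source of the paper's title --- is a continuous image principle: \emph{if spectral synthesis holds on $G$ and $\ff\colon G\to H$ is a continuous surjective homomorphism, then spectral synthesis holds on $H$.} Granting this, the necessity direction is immediate. Applying the principle to the quotient $G\to G/B\cong\R^n\times D$, synthesis descends to $\R^n\times D$; composing with the further quotient $\R^n\times D\to\R^2$ (available once $n\ge 2$) forces, together with Gurevich's theorem on $\R\times\R$, the bound $n\le 1$, while the quotient $\R^n\times D\to D$ forces, together with the discrete classification of \cite{MR2340978}, that the discrete group $D$ has finite rank. This yields exactly the stated form of $G/B$.

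For the sufficiency direction I would not use the principle but rather exhibit $G$ as a directed union of well-behaved open subgroups. Writing $G_0$ as the union of the open subgroups $\langle K,\,s_1,\dots,s_k\rangle$ over all finite subsets of $G_0$, each such subgroup is compactly generated with a compact open subgroup, hence topologically isomorphic to $\Z^m\times C$ with $C$ compact; multiplying by $\R^n$ displays $G$ as the directed union of the open subgroups $\R^n\times\Z^m\times C$. Because $n\le 1$, and because the finite rank of $D$ bounds the occurring exponents $m$ uniformly, the first theorem above applies to each of these subgroups. A limiting argument over this directed family, approximating the functions of an arbitrary subvariety by exponential monomials supported on the pieces, then transfers synthesis from the subgroups to $G$ itself.

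The two genuine difficulties are cleanly separated. The continuous image principle is the crux: pushing a synthesizable variety forward along an open surjection is delicate because the image of a uniformly closed translation invariant subspace need not be closed, so one must control the closure and verify that enough exponential monomials survive in the image; I expect this to carry most of the weight. The second, more technical, obstacle is the limiting step in the sufficiency proof, where --- exactly as in the discrete theorem of \cite{MR2340978} --- it is the finiteness of the rank that prevents the unbounded accumulation of exponential monomials and makes the approximation over the directed union converge.
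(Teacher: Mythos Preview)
The paper does not contain a proof of this statement: Theorems~1 and~2 are quoted from \cite{Sze23c} as background, and the present paper's own contribution is the continuous image principle (Theorem~\ref{mainthm}), proved via localisation (Theorem~\ref{loc}). So there is no proof here against which to compare your proposal.

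Your plan is still worth a comment, because it inverts the paper's logical order in an interesting way: you propose to \emph{derive} the necessity direction of Theorem~2 from Theorem~\ref{mainthm}. That is a clean application, with one caveat. Theorem~\ref{mainthm} says that a single \emph{synthesizable variety} has synthesizable continuous image; to pass from ``spectral synthesis holds on $G$'' to ``spectral synthesis holds on $H$'' you must also check that every variety $W$ on $H$ is the continuous image of some variety on $G$. When $\Phi$ is open --- as your quotient maps $G\to G/B$, $\R^n\times D\to\R^2$ and $\R^n\times D\to D$ all are --- the closure $V=\overline{\{\ff\circ\Phi:\ff\in W\}}$ works, since openness guarantees that compacts in $H$ are images of compacts in $G$ and hence that $\ff\circ\Phi\in V$ forces $\ff\in W$. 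This step is missing from your outline and should be made explicit.

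For sufficiency you sketch a directed-union argument and honestly flag the limiting step as the genuine difficulty. One minor correction: Theorem~1 places no constraint on the exponent $b$, so your uniform bound on $m$ is not needed to \emph{apply} Theorem~1 to each subgroup $\R^n\times\Z^m\times C$; the bound matters only in the limiting step, where --- as the discrete case of \cite{MR2340978} already shows --- it is indeed indispensable. Since the paper delegates this entire argument to \cite{Sze23c}, nothing here confirms or refutes your sketch.
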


These characterization theorems describe those groups where all varieties are synthesizable -- another question is if spectral synthesis holds a given particular variety, even if it does not hold on the whole group. In \cite{Sze23}, we introduced the concept of localization, which is an effective tool to prove spectral synthesis on varieties.  In this paper we apply this method to show that any continuous image of a synthesizable variety is synthesizable as well.

\section{Preliminaries}
Here we summarize some known results we shall use in the subsequent paragraphs. 
\vskip.2cm

Given a locally compact Abelian group $G$ the continuous complex homomorphisms of $G$ into the multiplicative group of nonzero complex numbers, resp. into the additive group of complex numbers are called {\it exponentials}, resp. {\it additive functions}. A product of additive functions is called a {\it monomial}, and a linear combination of monomials is called a {\it polynomial}. A product of an exponential and a polynomial is called an {\it exponential monomial}, and if the exponential is $m$, then we call the exponential monomial an {\it $m$-exponential monomial}. Hence polynomials are exactly the $1$-exponential monomials. Linear combinations of exponential monomials are called {\it exponential polynomials}.  One dimensional varieties are exactly those spanned by an exponential, and finite dimensional varieties are exactly those spanned by exponential monomials (see \cite{MR3185617}). The {\it variety of the function $f$} in $\mathcal C(G)$, denoted by $\tau(f)$, is the intersection of all varieties including $f$.
\vskip.2cm

It is known that the dual space of $\mathcal C(G)$ can be identified with the space $\mathcal M_c(G)$ of all compactly supported complex Borel measures on $G$. This space is called the {\it measure algebra} of $G$ -- it is a topological algebra with the linear operations, with the convolution of measures and with the weak*-topology arising from $\mathcal C(G)$. On the other hand, the space $\mathcal C(G)$ is a topological vector module over the measure algebra under the action realized by the convolution of measures and functions. The annihilators of subsets in $\mathcal C(G)$ and the annihilators of subsets in $\mathcal M_c(G)$ play an important role in our investigation. For each closed ideal $I$ in $\mathcal M_c(G)$ and for every variety $V$ in $\mathcal C(G)$, $\Ann I$ is a variety in $\mathcal C(G)$ and $\Ann V$ is a closed ideal in  $\mathcal M_c(G)$. Further, we have
$$
\Ann \Ann I=I\enskip\text{and}\enskip \Ann \Ann V=V
$$
(see \cite[Section 11.6]{MR3185617}, \cite[Section 1]{MR3502634}).
\vskip.2cm

The Fourier--Laplace transformation (shortly: Fourier transformation) on the measure algebra is defined as follows: for every exponential $m$ on $G$ and for each measure $\mu$ in $\mathcal M_c(G)$ its {\it Fourier transform} is 
$$
\widehat{\mu}(m)=\int \widecheck{m}\,d\mu,
$$
where $\widecheck{m}(x)=m(-x)$ for each $x$ in $G$. The Fourier transform $\widehat{\mu}$ is a complex valued function defined on the set of all exponentials on $G$. As the mapping $\mu\mapsto \widehat{\mu}$ is linear and $(\mu*\nu)\,\widehat{}=\widehat{\mu}\cdot \widehat{\nu}$, all Fourier transforms form a function algebra. By the injectivity of the Fourier transform, this algebra is isomorphic to $\mathcal M_c(G)$. If we equip the algebra of Fourier transforms by the topology arising from the topology of $\mathcal M_c(G)$, then we get the {\it Fourier algebra} of $G$, denoted by $\mathcal A(G)$. In fact, $\mathcal A(G)$ can be identified with $\mathcal M_c(G)$. We utilize this identification:  for instance, every ideal in $\mathcal A(G)$ is of the form $\widehat{I}$, where $I$ is an ideal in $\mathcal M_c(G)$. Based on this fact, we say that {\it spectral synthesis holds for the ideal $\widehat{I}$ in $\mathcal A(G)$}, if spectral synthesis holds for $\Ann I$ in $\mathcal C(G)$.
\vskip.2cm

We shall use the polynomial derivations on the Fourier algebra. A {\it derivation} on $\mathcal A(G)$ is a linear operator $D:\mathcal A(G)\to \mathcal A(G)$ such that
$$
D(\widehat{\mu}\cdot \widehat{\nu})=D(\widehat{\mu})\cdot \widehat{\nu}+\widehat{\mu}\cdot D(\widehat{\nu})
$$
holds for each $\widehat{\mu}, \widehat{\nu}$. We say that $D$ is a {\it first order derivation}. Higher order derivations are defined inductively: for a positive integer $n$ we say that linear operator $D$ on $\mathcal A(G)$ is a {\it derivation of order $n+1$}, if the two variable operator
$$
(\widehat{\mu}, \widehat{\nu})\mapsto D(\widehat{\mu}\cdot \widehat{\nu})-D(\widehat{\mu})\cdot \widehat{\nu}-\widehat{\mu}\cdot D(\widehat{\nu})
$$ 
is a derivation of order $n$ in both variables. The identity operator $id$ is considered a derivation of order $0$. All derivations form an algebra of operators, and the derivations in subalgebra generated by all first order derivations are called  {\it polynomial derivations}. They have the form $P(D_1,D_2,\dots,D_k)$, where $D_1,D_2,\dots,D_k$ are first order derivations, and $P$ is a complex polynomial in $k$ variables. In \cite{Sze23}, we proved the following result:

\begin{Thm}\label{deri}
	The linear operator $D$ on $\mathcal A(G)$ is a polynomial derivation if and only if there exists a unique polynomial $f_D$ such that
	$$
	D\widehat{\mu}(m)=\int f_D(x)m(-x)\,d\mu(x)
	$$
	holds for each $\widehat{\mu}$ in $\mathcal A(G)$ and for every exponential $m$ on $G$. 
\end{Thm}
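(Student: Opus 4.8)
The plan is to identify the polynomial derivations with the operators that multiply the underlying measure by a polynomial. For a polynomial $p$ on $G$ let $\Delta_p$ be the operator on $\mathcal A(G)$ determined by $\Delta_p\widehat\mu=\widehat{p\mu}$, where $p\mu$ is the (compactly supported) measure $A\mapsto\int_A p\,d\mu$; equivalently $\Delta_p\widehat\mu(m)=\int p(x)m(-x)\,d\mu(x)$. Thus the assertion to be proved is that the polynomial derivations are exactly the operators $\Delta_p$, with $f_D=p$ uniquely determined by $D$. Uniqueness is immediate: if $\int p(x)m(-x)\,d\mu(x)=\int q(x)m(-x)\,d\mu(x)$ for all $\mu$ and all exponentials $m$, then taking $m=1$ and $\mu=\delta_x$ a point mass gives $p(x)=q(x)$ for every $x$, so $p=q$.

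For the direction that every $\Delta_p$ is a polynomial derivation I would first treat $\Delta_a$ for a single additive function $a$. Using $(\mu*\nu)\widehat{}=\widehat\mu\cdot\widehat\nu$ and evaluating on the double integral $\int\!\int a(x+y)m(-(x+y))\,d\mu(x)\,d\nu(y)$, the additivity of $a$ and the multiplicativity of $m$ split this into the two Leibniz terms, so $\Delta_a$ is a first order derivation. Next I would record the compatibility $\Delta_p\Delta_q=\Delta_{pq}$, which follows at once from $p\cdot(q\mu)=(pq)\mu$ on the measure side. Writing an arbitrary polynomial as $p=P(a_1,\dots,a_k)$ with additive $a_i$ and $P$ an ordinary complex polynomial, we then obtain $\Delta_p=P(\Delta_{a_1},\dots,\Delta_{a_k})$, a polynomial expression in the first order derivations $\Delta_{a_i}$; hence $\Delta_p$ is a polynomial derivation.

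For the converse I would use that a polynomial derivation has the form $P(D_1,\dots,D_k)$ with each $D_i$ a first order derivation, together with the compatibility above: once each $D_i$ is shown to equal $\Delta_{a_i}$ for an additive $a_i$, the relation $\Delta_{a_i}\Delta_{a_j}=\Delta_{a_ia_j}$ collapses $P(D_1,\dots,D_k)$ into $\Delta_{P(a_1,\dots,a_k)}$, which is of the required form with $f_D=P(a_1,\dots,a_k)$. The whole matter therefore reduces to the first order case: every first order derivation $D$ is $\Delta_a$ for some additive $a$. Here I would exploit that $\widehat{\delta_x}\cdot\widehat{\delta_y}=\widehat{\delta_{x+y}}$ and that evaluation at a fixed exponential $m$ is multiplicative on $\mathcal A(G)$. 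Applying $D$ to $\widehat{\delta_x}\widehat{\delta_y}$, evaluating at $m$, and dividing by $m(-(x+y))=m(-x)m(-y)$ (legitimate since $m$ takes values in $\C\setminus\{0\}$) yields the Cauchy equation $h_m(x+y)=h_m(x)+h_m(y)$ for $h_m(x):=D\widehat{\delta_x}(m)/m(-x)$; thus each $h_m$ is additive, and one sets $a:=h_1$, i.e. $a(x)=D\widehat{\delta_x}(1)$.

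The main obstacle is to show that $h_m$ does not actually depend on $m$, equivalently that the first order derivation has ``constant coefficients'': a priori the functional equation only forces $x\mapsto h_m(x)$ to be additive for each separate $m$, and one must rule out a genuine $m$-dependence. I would handle this by using that $D$ maps $\mathcal A(G)$ into itself, so $D\widehat{\delta_x}=\widehat{\nu_x}$ for a compactly supported measure $\nu_x$; the cocycle identity coming from the Leibniz rule shows that $x\mapsto\delta_{-x}*\nu_x$ is additive into $\mathcal M_c(G)$, and the requirement that $\widehat{\nu_x}(m)=h_m(x)m(-x)$ be the transform of a measure, together with the identification of its value at $m=1$, should force $\nu_x=a(x)\delta_x$, that is $h_m=a$ for all $m$. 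Once $D\widehat{\delta_x}(m)=a(x)m(-x)$ is established on point masses, the representation $D\widehat\mu(m)=\int a(x)m(-x)\,d\mu(x)$ for arbitrary $\mu$ follows from the weak* density of finitely supported measures in $\mathcal M_c(G)$ and the continuity inherent in the identification $\mathcal A(G)\cong\mathcal M_c(G)$. I expect the verification that the $m$-dependence genuinely drops out to be the delicate point, and the place where the precise hypotheses on the admissible derivations enter.
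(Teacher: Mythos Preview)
The paper does not contain a proof of this theorem: it is quoted from the reference \cite{Sze23} with the sentence ``In \cite{Sze23}, we proved the following result,'' and no argument is reproduced here. Hence there is nothing in the present paper to compare your proposal against.

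As for the proposal itself, the forward direction (every $\Delta_p$ is a polynomial derivation) and the reduction of the converse to the first order case are fine and standard. The genuine gap is exactly where you flag it: showing that an \emph{arbitrary} first order derivation $D$ on $\mathcal A(G)$ is of the form $\Delta_a$. Your argument produces, for each exponential $m$, an additive function $h_m$, and then asserts that the cocycle identity together with the fact that $D\widehat{\delta_x}$ is the Fourier transform of some $\nu_x\in\mathcal M_c(G)$ ``should force $\nu_x=a(x)\delta_x$''. This is not established, and as written it cannot be: knowing that $m\mapsto h_m(x)m(-x)$ is a Fourier transform and that $h_1(x)=a(x)$ does not by itself pin down $\nu_x$ as a scalar multiple of $\delta_x$; many measures share the value $a(x)$ at $m=1$. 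You need an independent mechanism to rule out $m$-dependence, and the paper's definition of derivation imposes no continuity hypothesis on $D$, so your appeal to weak*-density of finitely supported measures to pass from $\delta_x$ to general $\mu$ is also unjustified without further input. These two points are the substantive content of the converse, and your sketch does not resolve them.
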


Iin \cite{Sze23}, we introduced the following concepts. Given an ideal $\widehat{I}$ in $\mathcal A(G)$ and an exponential $m$, we denote by $\mathcal P_{\widehat{I},m}$ the family of all polynomial derivations $P(D_1,D_2,\dots,D_k)$ which annihilate $\widehat{I}$ at $m$. This means that 
$$
\partial^{\alpha}P(D_1,D_2,\dots,D_k)\widehat{\mu}(m)=0
$$
for each multi-index $\alpha$ in $\N^k$, for every exponential $m$, and for every $\widehat{\mu}$ in $\widehat{I}$. The dual concept is the following: given a family $\mathcal P$ of polynomial derivations and an exponential $m$ we denote by $\widehat{I}_{\mathcal P,m}$ the set of all functions $\widehat{\mu}$ which are annihilated by every derivation in the family $\mathcal P$ at $m$. Then $\widehat{I}_{\mathcal P,m}$ is a closed ideal. Obviously, 
$$
\widehat{I}\subseteq \bigcap_m \widehat{I}_{\mathcal P_{\widehat{I},m},m}
$$
holds for every ideal $\widehat{I}$. We call $\widehat{I}$ {\it localizable}, if we have equality in this inclusion. In other words, the ideal $\widehat{I}$ in $\mathcal A(G)$ is localizable if and only if it has the following property: if $\widehat{\mu}$ is annihilated by all polynomial derivations, which annihilate $\widehat{I}$ at each $m$, then $\widehat{\mu}$ is in $\widehat{I}$. The main result in \cite{Sze23} is the following:

\begin{Thm}\label{loc}
	Let $G$ be a locally compact Abelian group. The ideal $\widehat{I}$ in the Fourier algebra is localizable if and only if $\Ann I$ is synthesizable.
\end{Thm}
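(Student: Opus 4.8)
The plan is to reduce the statement to a single local duality identity and then read off the equivalence from the definition of localizability together with the bijectivity of the annihilator operation. Write $V=\Ann I$, so that $I=\Ann V$ by the double annihilation. Let $W$ be the closed linear span of all exponential monomials contained in $V$. Since the finite dimensional subvarieties of $V$ are exactly the finite spans of exponential monomials, $V$ is synthesizable precisely when $W=V$; because $W\subseteq V$ and the annihilator is inclusion reversing with $\Ann\Ann=\mathrm{id}$, this is equivalent to $\Ann W=I$. On the dual side, $\widehat I$ is localizable exactly when $\widehat I=\bigcap_m\widehat I_{\mathcal P_{\widehat I,m},m}$. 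Hence the whole theorem follows once we show, under the Fourier identification, that $\Ann W=\bigcap_m\widehat I_{\mathcal P_{\widehat I,m},m}$.

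Grouping exponential monomials by their exponential, let $V_m$ denote the closed span of the $m$-exponential monomials lying in $V$; then $W=\overline{\sum_m V_m}$, and since the annihilator carries a closed sum of varieties to the intersection of the corresponding ideals, $\Ann W=\bigcap_m\Ann V_m$. Thus it suffices to prove the local identity $\Ann V_m=\widehat I_{\mathcal P_{\widehat I,m},m}$ for every exponential $m$. To establish it I would set up the dictionary furnished by Theorem~\ref{deri}: writing $a=(a_1,\dots,a_k)$ for the relevant additive functions, to a polynomial $p$ I assign the polynomial derivation $D_p$ with $f_{D_p}=p$, so that first order derivations correspond to additive functions and the map $p\mapsto D_p$ is multiplicative; I write $D_\alpha$ for the derivation with $f_{D_\alpha}=a^\alpha$. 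Two computations drive the argument. First, differentiating $\widehat\mu$ in the exponential parameter along an additive direction $a_i$ reproduces, up to sign, the derivation $D_i$; consequently the family of conditions $\partial^\alpha P(D_1,\dots,D_k)\widehat\mu(m)=0$ defining $\mathcal P_{\widehat I,m}$ is exactly closure under multiplying $f_D$ by the $a_i$. Second, expanding the convolution by the Taylor formula
$$
p\big(a(x)-a(y)\big)=\sum_\alpha\frac{(-1)^{|\alpha|}}{\alpha!}\,(\partial^\alpha p)\big(a(x)\big)\,a(y)^\alpha
$$
and inserting Theorem~\ref{deri} expresses the vanishing of $\mu*(pm)$ through the numbers $D_\alpha\widehat\mu(m)$. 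Together these identify the $m$-exponential monomials of $V$ with a family of derivation conditions on $\widehat I$ at $m$, and, after taking annihilators, yield $\Ann V_m=\widehat I_{\mathcal P_{\widehat I,m},m}$.

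Assembling the pieces, $\Ann W=\bigcap_m\Ann V_m=\bigcap_m\widehat I_{\mathcal P_{\widehat I,m},m}$, so $\Ann W=I$ if and only if $\widehat I=\bigcap_m\widehat I_{\mathcal P_{\widehat I,m},m}$; by the first paragraph this is precisely the asserted equivalence between synthesizability of $\Ann I$ and localizability of $\widehat I$.

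The main obstacle is the local identity $\Ann V_m=\widehat I_{\mathcal P_{\widehat I,m},m}$. The delicate point is that the convolution $\mu*(pm)$ produces the derivatives $\partial^\alpha p$, which \emph{lower} the degree, whereas the defining conditions of $\mathcal P_{\widehat I,m}$ involve multiplication by the additive functions, which \emph{raise} it; reconciling these two opposite gradings into a single annihilator identity — and doing so uniformly in $m$, including when the space of $m$-monomials in $V$ is infinite dimensional — is where the real content sits. Verifying that the resulting derivation conditions are genuinely dual to the span of the $m$-monomials, so that no monomials are lost or spuriously added when passing to annihilators, is the step I expect to require the most care.
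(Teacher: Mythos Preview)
The paper does not contain a proof of this theorem. It is quoted as ``the main result in \cite{Sze23}'' and is used as a black box in the proof of Theorem~\ref{mainthm}; no argument for it appears anywhere in the present paper. So there is no in-paper proof to compare your proposal against.

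As to the proposal itself: your reduction is sound in outline. Setting $V=\Ann I$, $W$ the closed span of the exponential monomials in $V$, and writing $W=\overline{\sum_m V_m}$ with $V_m$ the $m$-component, the chain
\[
V\ \text{synthesizable}\ \Longleftrightarrow\ W=V\ \Longleftrightarrow\ \Ann W=I\ \Longleftrightarrow\ \bigcap_m \Ann V_m=I
\]
is correct, and the dictionary of Theorem~\ref{deri} between polynomials and polynomial derivations is exactly the right tool. The point you flag as ``the main obstacle'' is indeed the entire content: establishing, for every exponential $m$, the identity $(\Ann V_m)\,\widehat{}\,=\widehat I_{\mathcal P_{\widehat I,m},m}$. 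Your proposal sketches the mechanism (the Taylor expansion matches convolution against $pm$ with the values $D_\alpha\widehat\mu(m)$, while the $\partial^\alpha$ in the definition of $\mathcal P_{\widehat I,m}$ corresponds to closure of the polynomial family under multiplication by additive functions), but you stop short of actually proving the two inclusions. In particular, you have not shown that the polynomials $p$ with $pm\in V$ form an ideal-like family closed under the operations dual to the $\partial^\alpha$, nor that passing to annihilators loses nothing; these are precisely the verifications needed, and they are not automatic. What you have written is a correct plan with the decisive lemma left unproved.
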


\section{Main result}

Let $G$ be a locally compact Abelian group. Given a variety $V$ in $\mathcal C(G)$ a {\it continuous image of $V$} is a variety $W$ on a locally compact Abelian group $H$ such that there exists a continuous surjective homomorphism $\Phi:G\to H$ such that the function $\varphi$ is in $W$ if and only if the function $\varphi\circ \Phi$ is in $V$.

\begin{Thm}\label{mainthm}
	Let $G$ be a locally compact Abelian group and $V$ a variety on $G$. If $V$ is synthesizable, then every continuous image of $V$ is synthesizable. 
\end{Thm}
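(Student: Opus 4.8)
The plan is to reduce the statement to a question about a single subvariety of $V$ and then settle that question with the localization machinery of Theorems \ref{deri} and \ref{loc}. Write $N=\ker\Phi$ and let $U=\{f\in\mathcal C(G):\tau_n f=f \text{ for all } n\in N\}$ be the variety of $N$-invariant functions; this is the image of the pullback $\Phi^{*}\colon\mathcal C(H)\to\mathcal C(G)$, $\Phi^{*}\ff=\ff\circ\Phi$. Since $\Phi$ is surjective, $\Phi^{*}$ is injective, it intertwines the translations of $H$ with those of $G$ (every translation of $H$ lifts along $\Phi$), and it carries exponential monomials to exponential monomials. Provided compact subsets of $H$ lift to compact subsets of $G$ — which holds when $\Phi$ is open, equivalently when $H$ is the topological quotient $G/N$ — the map $\Phi^{*}$ is a topological isomorphism of $\mathcal C(H)$ onto $U$. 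Under it $W$ corresponds exactly to $V_{0}:=V\cap U=\Phi^{*}(W)$, finite dimensional subvarieties correspond to finite dimensional subvarieties, and spans and closures are preserved; hence $W$ is synthesizable if and only if $V_{0}$ is. The theorem thus reduces to: if $V$ is synthesizable, then $V\cap U$ is synthesizable.

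The delicate point is that $V_{0}$ is merely a subvariety of $V$, and synthesizability of $V$ does not by itself descend to subvarieties — that would be the stronger assertion that spectral synthesis holds for $V$. So I would not argue primally, but pass to the Fourier algebra and invoke Theorem \ref{loc}: with $J=\Ann V$ and $K=\Ann W$, synthesizability of $V$ means $\widehat J$ is localizable, and the goal is that $\widehat K$ is localizable. The bridge is the pushforward of measures $\Phi_{*}\colon\mathcal M_c(G)\to\mathcal M_c(H)$, $\langle\Phi_{*}\mu,\ff\rangle=\int\ff\circ\Phi\,d\mu$, which is an algebra homomorphism (because $\Phi$ is a homomorphism) and satisfies $\widehat{\Phi_{*}\mu}(n)=\widehat\mu(n\circ\Phi)$ for every exponential $n$ on $H$. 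Dually, each polynomial derivation $D$ on $\mathcal A(H)$, with symbol $f_{D}$ supplied by Theorem \ref{deri}, has a pullback $D^{\Phi}$ on $\mathcal A(G)$ whose symbol is $f_{D}\circ\Phi$ (a polynomial on $G$, since additive functions on $H$ compose with $\Phi$ to additive functions on $G$); a direct computation then yields the commutation relation
\[
D\,\widehat{\Phi_{*}\mu}(n)=D^{\Phi}\widehat\mu(n\circ\Phi)
\]
for all $\mu\in\mathcal M_c(G)$ and all exponentials $n$ on $H$.

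From $\ff\circ\Phi\in V$ for $\ff\in W$ one gets $\Phi_{*}(J)\subseteq K$, and combining this with the commutation relation gives the decisive correspondence of local data: if $D$ annihilates $\widehat K$ at $n$, then $D^{\Phi}$ annihilates $\widehat J$ at the $N$-invariant exponential $n\circ\Phi$, i.e. $D\in\mathcal P_{\widehat K,n}$ implies $D^{\Phi}\in\mathcal P_{\widehat J,n\circ\Phi}$. The plan is to use this, together with the surjectivity of $\Phi_{*}$ (again a consequence of openness of $\Phi$), to lift any $\nu$ surviving the localization of $\widehat K$ to a measure $\mu$ on $G$, feed the localizability of $\widehat J$ with the transported conditions, and conclude $\nu\in K$. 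The main obstacle is that the relevant exponentials for $V_{0}\subseteq U$ are only the $N$-invariant ones $m=n\circ\Phi$, while the transported conditions directly control $\widehat\mu$ only against pulled-back derivations $D^{\Phi}$ at such $m$; one must show that the ``transverse'' data — non-$N$-invariant exponentials and derivations with non-$N$-invariant symbols — impose nothing new, so that localizability of $\widehat J$ can be applied to place $\mu$ in $\Ann(V\cap U)$. When $N$ is compact this is transparent: averaging over $N$ gives a continuous, translation-equivariant projection of $\mathcal C(G)$ onto $U$ that annihilates the non-$N$-invariant exponential monomials and fixes the rest, so density of exponential monomials in $V$ pushes forward at once to density in $V\cap U$. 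The real content — and the step where the localization method is indispensable — is the non-compact kernel case, in which no such projection exists and the matching of the full local structure of $\widehat K$ at $n$ with that of $\widehat J$ at $n\circ\Phi$, via the correspondence $D\mapsto D^{\Phi}$, must do the work of the missing averaging.
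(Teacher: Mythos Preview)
Your architecture matches the paper's: reduce via Theorem~\ref{loc} to localizability of $(\Ann W)\,\widehat{}\,$, push measures forward along $\Phi$, lift a $\nu$ surviving the localization test to some $\mu$ with $\Phi_*\mu=\nu$, and try to invoke localizability of $\widehat J=(\Ann V)\,\widehat{}\,$ on $\widehat\mu$. The gap is exactly where you flag it, and you do not close it. Pulling back derivations $D\mapsto D^\Phi$ gives only the implication $D\in\mathcal P_{\widehat K,n}\Rightarrow D^\Phi\in\mathcal P_{\widehat J,\,n\circ\Phi}$; to apply localizability of $\widehat J$ to $\widehat\mu$ you need \emph{every} $D'\in\mathcal P_{\widehat J,m'}$ to annihilate $\widehat\mu$, not merely those of the special form $D^\Phi$ at $N$-invariant exponentials $m'=n\circ\Phi$. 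For non-compact $N$ you supply no mechanism beyond asserting that localization ``must do the work of the missing averaging.'' That is a plan, not a proof.

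The paper's argument closes precisely this gap by running the derivation correspondence in the \emph{opposite} direction. Given an arbitrary $D'\in\mathcal P_{\widehat J,\,m\circ\Phi}$ with polynomial symbol $f_{D'}$ on $G$, the paper writes $f_{D'}=p\circ\Phi$ for some polynomial $p$ on $H$ (this is deduced from a prior claim that the adjoint of $\Phi_*$ is onto, so that every continuous function on $G$ --- in particular every polynomial --- factors through $\Phi$); then $D'$ acting at $m\circ\Phi$ coincides with the pullback of a polynomial derivation on $\mathcal A(H)$ acting at $m$. Since $\Phi_*(\Ann V)\subseteq\Ann W$, that induced derivation lies in $\mathcal P_{\widehat K,m}$ and hence annihilates $\widehat\nu$ by hypothesis, which forces $D'\widehat\mu(m\circ\Phi)=0$. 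In short, the paper argues that your map $D\mapsto D^\Phi$ is \emph{surjective} onto the class of derivations that matter, and this surjectivity --- obtained by factoring symbols through $\Phi$ rather than by any averaging --- is the missing idea in your proposal.
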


\begin{proof}
	In the light of Theorem \ref{loc}, it is enough to show that if $V$ is synthesizable, then, for every continuous image $W$ of $V$, the ideal $(\Ann W)\,\widehat{}$\, is localizable.
	\vskip.2cm
	
	Assume that $W$ is a variety on the locally compact Abelian group $H$, and $\Phi:G\to H$ is a continuous surjective homomorphism such that the function $\varphi$ is in $W$ if and only if the function $\varphi\circ \Phi$ is in $V$. We denote $\Ann V$, resp. $\Ann W$ by $I$, resp. $J$.
	\vskip.2cm
	
	First we observe that for every exponential $m$ on $H$, the function $m\circ \Phi$ is an exponential on $G$. Similarly, for every additive function $a$ on $H$, the function $a\circ \Phi$ is an additive function on $G$. From this we conclude that for every polynomial $p$, resp. $m$-exponential monomial $\varphi$ on $H$, the function $p\circ \Phi$, resp. the function $\varphi\circ \Phi$ is a polynomial, resp. an $m\circ \Phi$-exponential monomial on $G$. 
	\vskip.2cm
	
	The mapping $\Phi$ induces a continuous algebra homomorphism $\Phi_H$ of the measure algebra $\mathcal M_c(G)$ into the measure algebra $\mathcal M_c(H)$ is the following manner: for each measure $\mu$ on $G$ we let 
	$$
	\langle \Phi_H(\mu),\varphi\rangle=\langle \mu,\varphi\circ \Phi\rangle
	$$
	whenever $\varphi$ is in $\mathcal C(H)$. It is easy to se that $\Phi_H$ is a continuous linear functional on $\mathcal C(H)$, hence it is in $\mathcal M_c(H)$. We can check easily that $\Phi_H$ is a continuous algebra homomorphism. 
	\vskip.2cm
	
	In fact, $\Phi_H$ is surjective. Indeed, for each $u$ in $H$ there is an $x$ in $G$ such that $u=\Phi(x)$. It follows, for each $\varphi$ in $\mathcal C(H)$,
	$$
	\langle \Phi_H(\delta_x),\varphi\rangle=\langle \delta_x,\varphi\circ \Phi\rangle=\varphi(\Phi(x))=\varphi(u)=\langle \delta_u,\varphi\rangle,
	$$
	hence $\Phi_H(\delta_x)=\delta_{\Phi(x)}$. As each measure in $\mathcal M_c(H)$ is a weak*-limit of finitely supported measures, and all finitely supported measures are in the image of $\Phi_H$, we conclude that $\Phi_H$ is surjective.
	\vskip.2cm
	
	The adjoint mapping of $\Phi_H$ is a linear mapping from $\mathcal M_c(H)^*$ onto $\mathcal M_c(G)^*$. As these spaces are identified by $\mathcal C(H)$, resp. $\mathcal C(G)$, we realize the adjoint of $\Phi_H$ as the mapping
	$$
	\Phi_H^*(\varphi)=\varphi\circ \Phi
	$$
	for each $\varphi$ in $\mathcal C(H)$. As $\Phi_H$ is surjective, so is $\Phi_H^*$, and we infer that every function in $\mathcal C(G)$ is of the form $\varphi\circ \Phi$ with some $\varphi$ in $\mathcal C(H)$.
	\vskip.2cm
	
	Obviously, $\Phi_H$ induces a continuous algebra homomorphism from the Fourier algebra $\mathcal A(G)$ onto the Fourier algebra $\mathcal A(H)$, which we denote by $\widehat{\Phi}_H$, satisfying
	$$
	\widehat{\Phi}_H(\widehat{\mu})= \Phi_H(\mu)\,\widehat{}
	$$
	for each $\mu$ in $\mathcal M_c(G)$. We claim that the ideal $\widehat{I}$ is mapped onto $\widehat{J}$ by  $\widehat{\Phi}_H$. Let $\widehat{\mu}$ be in $\widehat{I}$, then $\mu*f=0$ for each $f$ in $V$. We need to show that $\widehat{\Phi}_H(\widehat{\mu})$ is in $\widehat{J}$, that is,  $\Phi_H(\mu)$ annihilates $W$. If $\varphi$ is in $W$, then $f=\varphi\circ \Phi$ is in $V$, hence
	$$
	\Phi_H(\mu)*\varphi(u)=\int_H \varphi(u-v)\,d\Phi_H(\mu)(v)=\int_G \varphi(\Phi(x)-\Phi(y)\,d\mu(y)=
	$$
	$$
	\int_G \varphi(\Phi(x-y))\,d\mu(y)=\int_G (\varphi\circ \Phi)(x-y)\,d\mu(y)=
	$$
	$$
	 \int_G f(x-y)\,d\mu(y)=\mu*f(x)=0.
	$$
	On the other hand, if $\widehat{\nu}$ is in $\widehat{J}$, then $\nu$ is in $J=\Ann W$, further $\nu=\Phi_H(\mu)$ for some $\mu$ in $\mathcal M_c(G)$. We want to show that $\mu$ is in $\Ann V=\Ann \Ann I$. Assuming the contrary, there exists an $f$ in $V$ such that $\mu*f\ne 0$. We have $f=\varphi\circ \Phi$ for some $\varphi$ in $W$, and this implies
	$$
	\nu*\varphi=\Phi_H(\mu)*\varphi=\mu*(\varphi\circ \Phi)=\mu*f\ne 0,
	$$
	a contradiction, as $\nu$ is in $\Ann W$ and $\varphi$ is in $W$. We conclude that $\mu$ is in $\Ann V$, hence the mapping $\widehat{\Phi}_H:\widehat{I}\to \widehat{J}$ is onto.
	\vskip.2cm
	
	Now we are ready to show that $\widehat{J}$ is localizable, if $V$ is synthesizable, i.e. $\widehat{I}$ is localizable. Let $\widehat{\nu}$ be  in $\widehat{J}_{\mathcal P_{m, \widehat{J}},m}$ -- we need to show that $\widehat{\nu}$ is in $\widehat{J}$. Here $m$ is an arbitrary exponential on $H$, hence $m\circ \Phi$ is an exponential on $G$. Let $\widehat{\nu}=\widehat{\Phi}_H(\widehat{\mu})$, where $\widehat{\mu}$ is in $\mathcal A(G)$. It is enough to show that $\widehat{\mu}$ is in $\widehat{I}$. Using the localizability of $\widehat{I}$, it is enough to show that every derivation in $\widehat{I}_{\mathcal P_{m\circ \Phi, \widehat{I}},m\circ \Phi}$ annihilates $\widehat{\mu}$ at $m\circ \Phi$. Let $D$ be a polynomial derivation in $\widehat{I}_{\mathcal P_{m\circ \Phi, \widehat{I}},m\circ \Phi}$. It has the form 
	$$
	D\widehat{\mu}(m\circ \Phi)=\int_G f_D(x) (m\circ \Phi)(-x)\,d\mu(x)
	$$
	for each exponential $m$ on $H$, where $f_D:G\to\C$ is a polynomial. We have seen above that $f_D$ can be written as $f_D=p_D\circ \Phi$ with some polynomial $p_D:H\to\C$. Hence we have
	$$
	D\widehat{\mu}(m\circ \Phi)=\int_G f_D(x) (m\circ \Phi)(-x)\,d\mu(x)=\int_G (p_D\circ \Phi)(x) (m\circ \Phi)(-x)\,d\mu(x),
	$$
	or 
	$$
	D\widehat{\nu}(m)=D\widehat{\Phi}_H(\widehat{\mu})(m)=D\Phi_H(\mu)\,\widehat{}(m)=\int_G p_D(u) m(-u)\,d\Phi_H(\mu)(u).
	$$
	This means that $D$ induces a polynomial derivation on $\mathcal A(H)$, which is in $\widehat{J}_{\mathcal P_{m, \widehat{J}},m}$. By assumption, this derivation annihilates $\widehat{\nu}$ at $m$, which implies that $D$ annihilates $\widehat{\mu}$ at $m\circ \Phi$. As this holds for each $D$ in $\widehat{I}_{\mathcal P_{m\circ \Phi, \widehat{I}},m\circ \Phi}$, by the localizability of $\widehat{I}$, we conclude that $\widehat{\mu}$ is in $\widehat{I}$, thus $\widehat{\nu}$ is in $\widehat{J}$, and our theorem is proved.
\end{proof}

\end{document}